\providecommand{\U}[1]{\protect \rule{.1in}{.1in}}
\newtheorem{theorem}{Theorem}
\newtheorem{corollary}[theorem]{Corollary}
\newtheorem{remark}[theorem]{Remark}
\numberwithin{equation}{section}
\begin{document}
\title[Inequalities for indices of coincidence and entropies]{Inequalities for indices of coincidence and entropies}
\author[Alexandra M\u adu\c ta]{Alexandra M\u adu\c ta}
\address[Alexandra M\u adu\c ta]{Technical University of Cluj-Napoca, Department of
Mathematics, 28 Memorandumului Street, 400114 Cluj-Napoca, Romania}
\email{boloca.alexandra91@yahoo.com}
\author[Diana Otrocol]{Diana Otrocol}
\address[Diana Otrocol]{Technical University of Cluj-Napoca, Department of Mathematics,
28 Memorandumului Street, 400114 Cluj-Napoca, Tiberiu Popoviciu Institute of
Numerical Analysis, Romanian Academy, P.O.Box. 68-1, 400110 Cluj-Napoca, Romania}
\email{Diana.Otrocol@math.utcluj.ro}
\author[Ioan Ra\c sa]{Ioan Ra\c sa}
\address[Ioan Ra\c sa]{Technical University of Cluj-Napoca, Department of Mathematics,
28 Memorandumului Street, 400114 Cluj-Napoca, Romania}
\email{Ioan.Rasa@math.utcluj.ro}

\begin{abstract}
We consider a probability distribution depending on a real parameter $x$. As
functions of $x$, the R\'enyi entropy and the Tsallis entropy can be expressed
in terms of the associated index of coincidence $S(x)$. We establish
recurrence relations and inequalities for $S(x),$ which can be used in order
to get information concerning the two entropies.\\ Keywords: probability
distribution, R\'enyi entropy, Tsallis entropy, index of coincidence, functional
equations, inequalities. \newline MSC: 39B22, 39B62, 94A17, 26D07.

\end{abstract}
\maketitle


\section{Introduction}

Let $c\in \mathbb{R}$. Set $I_{c}=\left[  0,-\frac{1}{c}\right]  $ if $c<0$,
and $I_{c}=[0,+\infty)$ if $c\geq0$. For $\alpha \in \mathbb{R}$ and
$k\in \mathbb{N}_{0}$ the binomial coefficients are defined as usual by
\[
{\binom{\alpha}{k}}:=\frac{\alpha(\alpha-1)\dots(\alpha-k+1)}{k!}%
\quad \mbox{if }k\in \mathbb{N},\mbox{ and }{\binom{\alpha}{0}}:=1.
\]

Let $n>0$ be a real number, $k\in \mathbb{N}_{0}$ and $x\in I_{c}$. Define
\[
p_{n,k}^{[c]}(x):=(-1)^{k}{\tbinom{-\frac{n}{c}}{k}}(cx)^{k}(1+cx)^{-\frac
{n}{c}-k},\quad \mbox{ if }c\neq0,
\]%
\[
p_{n,k}^{[0]}(x):=\lim_{c\rightarrow0}p_{n,k}^{[c]}(x)=\frac{(nx)^{k}}%
{k!}e^{-nx},\quad \mbox{ if }c=0.
\]

Then $\sum_{k=0}^{\infty}p_{n,k}^{[c]}(x)=1$. Suppose that $n>c$ if $c\geq0$,
or $n=-cl$ with some $l\in \mathbb{N}$ if $c<0$.

With this notation we consider the discrete distribution of probability
$\left(  p_{n,k}^{[c]}(x)\right)  _{k=0,1,...}$ depending on the parameter
$x\in I_{c}.$

The associated index of coincidence is%
\begin{equation}
S_{n,c}(x):=\sum \limits_{k=0}^{\infty}\left(  p_{n,k}^{[c]}(x)\right)
^{2},\ x\in I_{c}.\label{1.1}%
\end{equation}

The R\'{e}nyi entropy and the Tsallis entropy corresponding to the same
distribution of probability are defined, respectively, by%
\begin{equation}
R_{n,c}(x)=-\log S_{n,c}(x) \label{1.2}%
\end{equation}
and%
\begin{equation}
T_{n,c}(x)=1-S_{n,c}(x). \label{1.3}%
\end{equation}
For $c=-1$ we are dealing with the binomial distribution and%
\begin{equation}
S_{n,-1}(x):=\sum \limits_{k=0}^{n}\left(  {\dbinom{n}{k}}x^{k}(1-x)^{n-k}%
\right)  ^{2},\ x\in \lbrack0,1]. \label{1.4}%
\end{equation}
The case $c=0$ corresponds to the Poisson distribution, for which%
\begin{equation}
S_{n,0}(x):=e^{-2nx}\sum \limits_{k=0}^{\infty}\dfrac{(nx)^{2k}}{\left(
k!\right)  ^{2}},\ x\geq0. \label{1.5}%
\end{equation}
For $c=1$ we have the negative binomial distribution, with%
\begin{equation}
S_{n,1}(x):=\sum \limits_{k=0}^{\infty}\left(  {\dbinom{n+k-1}{k}}%
x^{k}(1+x)^{-n-k}\right)  ^{2},\ x\geq0. \label{1.6}%
\end{equation}

The binomial, Poisson, respectively negative binomial distributions correspond
to the classical Bernstein, Sz\'asz-Mirakyan, respectively Baskakov operators
from Approximation Theory.

The distribution%
\[
\left(  \dbinom{n}{k}x^{k}(1+x)^{-n}\right)  _{k=0,1,\ldots,n},\ x\in
\lbrack0,+\infty),
\]
corresponds to the Bleimann-Butzer-Hahn operators, while%
\[
\left(  \dbinom{n+k}{k}x^{k}(1-x)^{n+1}\right)  _{k=0,1,\ldots},\ x\in
\lbrack0,1),
\]
is connected with the Meyer-K\"{o}nig and Zeller operators.

The indices of coincidence and the entropies associated with all these
distributions were studied in \cite{R1}. We continue this study. To keep the
same notation as in \cite{R1}, let%
\begin{align}
F_{n}(x) &  :=S_{n,-1}(x),\ G_{n}(x):=S_{n,1}(x),\ K_{n}(x):=S_{n,0}%
(x),\nonumber \\
U_{n}(x) &  :=%
{\textstyle \sum \limits_{k=0}^{n}}
\left(  \dbinom{n}{k}x^{k}(1+x)^{-n}\right)  ^{2},\ x\in \lbrack0,+\infty
),\label{1.7}\\
J_{n}(x) &  :=%
{\textstyle \sum \limits_{k=0}^{\infty}}
\left(  \dbinom{n+k}{k}x^{k}(1-x)^{n+1}\right)  ^{2},\ x\in \lbrack
0,1).\label{1.8}%
\end{align}

In Section 2 we present several relations between the functions $F_{n}(x)$,
$G_{n}(x),\ U_{n}(x),\ J_{n}(x)$,  as well as between these functions and the
Legendre polynomials. By using the three-terms recurrence relations involving
the Legendre polynomials we establish recurrence relations involving three
consecutive terms from the sequences $\left(  F_{n}(x)\right)  $, $\left(
G_{n}(x)\right)  ,\newline  \left(  U_{n}(x)\right)  ,\ $respectively $\left(
J_{n}(x)\right)  $. We recall also some explicit expressions of these functions.

Section 3 is devoted to inequalities between consecutive terms of the above
sequences; in particular, we emphasize that for fixed $x$ the four
sequences are convex.

Other inequalities are presented in Section 4. All the inequalities can be
used to get information about the R\'enyi entropies and Tsallis entropies
connected with the corresponding probability distributions.

\section{Recurrence relations}

$F_{n}(x)$ is a polynomial, $G_{n}(x),\ U_{n}(x),\ J_{n}(x)$ are rational
functions. On their maximal domains, these functions are connected by several
relations (see \cite{R1}, Cor. 13, (46), (53), (54)):%
\begin{align}
F_{n}(x)  &  =(1-2x)^{2n+1}G_{n+1}(-x),\label{2.1}\\
F_{n}(x)  &  =U_{n}\left(  \frac{x}{1-x}\right)  ,\label{2.2}\\
F_{n}(x)  &  =-(1-2x)^{2n+1}J_{n}\left(  \frac{x-1}{x}\right)  . \label{2.3}%
\end{align}
Consider the Legendre polynomial (see \cite[22.3.1]{A})%
\begin{equation}
P_{n}(t)=2^{-n}%
{\textstyle \sum \limits_{k=0}^{n}}
\dbinom{n}{k}^{2}(x+1)^{k}(x-1)^{n-k}. \label{2.4}%
\end{equation}
Then (see \cite[(39)]{R1})%
\begin{equation}
P_{n}(t)=(1-2x)^{-n}F_{n}(x), \label{2.5}%
\end{equation}
where%
\[
t=\frac{1-2x+2x^{2}}{1-2x},\ x\in \lbrack0,\frac{1}{2}).
\]

Combining (\ref{2.5}) with (\ref{2.1}), (\ref{2.2}) and (\ref{2.3}), we get%
\begin{align}
P_{n}(t)  &  =(1-2x)^{n+1}G_{n+1}(-x),\label{2.6}\\
P_{n}(t)  &  =(1-2x)^{-n}U_{n}\left(  \frac{x}{1-x}\right), \label{2.7}\\
P_{n}(t)  &  =-(1-2x)^{n+1}J_{n}\left(  \frac{x-1}{x}\right)  . \label{2.8}%
\end{align}
On the other hand, the Legendre polynomials satisfy the recurrence relation
(\cite[22.7.1]{A})%
\begin{equation}
(n+1)P_{n+1}(t)-(2n+1)tP_{n}(t)+nP_{n-1}(t)=0. \label{2.9}%
\end{equation}
Now (\ref{2.9}) together with (\ref{2.5}), (\ref{2.6}), (\ref{2.7}),
(\ref{2.8}) lead to

\begin{theorem}
\label{th2.1} The functions $F_{n}(x),G_{n}(x),\ U_{n}(x)$ and$\ J_{n}%
(x)\ $satisfy the following three-terms recurrence relations:%
\begin{align}
2(n+1)F_{n+1}(x)  &  =(2n+1)(1+(1-2x)^{2})F_{n}(x)-2n(1-2x)^{2}F_{n-1}%
(x),\label{2.10}\\
n(1+2x)^{2}\!G_{n+1}(x)\!  &  =\!(2n-1)\! \! \left(  \!1+2x+2x^{2}\! \right)
\!G_{n}(x)\!-\!(n-1)G_{n-1}(x),\label{2.11}\\
(n+1)(1+t)^{2}U_{n+1}(t)  &  =(2n+1)(t^{2}+1)U_{n}(t)-n(1-t)^{2}%
U_{n-1}(t),\label{2.12}\\
(n+1)(1+t)^{2}J_{n+1}(t)  &  =(2n+1)(t^{2}+1)J_{n}(t)-n(1-t)^{2}J_{n-1}(t).
\label{2.13}%
\end{align}

\end{theorem}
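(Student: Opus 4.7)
The whole setup of Section 2 — the four identities \eqref{2.5}--\eqref{2.8} that re-express $P_n(t)$ in terms of $F_n$, $G_{n+1}(-x)$, $U_n(x/(1-x))$ and $J_n((x-1)/x)$, together with the Legendre three-term recurrence \eqref{2.9} — makes this theorem essentially a substitution exercise. The plan is to insert each of \eqref{2.5}--\eqref{2.8} into \eqref{2.9}, change variables back to the natural argument of the function in question, and clear denominators.

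For \eqref{2.10} I would substitute $P_n(t)=(1-2x)^{-n}F_n(x)$ from \eqref{2.5} directly into \eqref{2.9}, multiply by $2(1-2x)^{n+1}$, and invoke the elementary identity $2(1-2x+2x^2)=1+(1-2x)^2$; the recurrence \eqref{2.10} falls out at once. For \eqref{2.11} the variable change is simply $x\mapsto -x$ in \eqref{2.6}: this turns the Legendre argument into $(1+2x+2x^2)/(1+2x)$ and gives $P_n(t)=(1+2x)^{n+1}G_{n+1}(x)$. Plugging this into \eqref{2.9}, dividing by $(1+2x)^{n}$, and finally shifting $n\mapsto n-1$ delivers \eqref{2.11}.

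Equations \eqref{2.12} and \eqref{2.13} need a genuine change of variable. Setting $u=x/(1-x)$ in \eqref{2.7}, a short computation gives $1-2x=(1-u)/(1+u)$ and the Legendre argument $(1-2x+2x^2)/(1-2x)=(1+u^2)/(1-u^2)$, so that $P_n((1+u^2)/(1-u^2))=((1+u)/(1-u))^{n}U_n(u)$. Substituting into \eqref{2.9} and multiplying through by $((1-u)/(1+u))^{n-1}(1-u)^2$ causes the coefficient $(1+u^2)(1+u)(1-u)^2/((1-u^2)(1-u))$ to collapse to $1+u^2$; renaming $u$ as $t$ then produces \eqref{2.12}. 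The derivation of \eqref{2.13} from \eqref{2.8} is completely parallel, with the change of variable $u=(x-1)/x$. The only real obstacle is the notational bookkeeping — keeping the substitution, the index shift and the sign change straight; the algebra itself is routine, and no new ingredients beyond \eqref{2.5}--\eqref{2.9} are required.
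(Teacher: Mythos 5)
Your proposal is correct and is exactly the paper's route: the paper derives Theorem \ref{th2.1} by substituting \eqref{2.5}--\eqref{2.8} into the Legendre recurrence \eqref{2.9} and changing variables, which is what you carry out (and your algebraic details, including the collapse to $1+u^2$ in \eqref{2.12}--\eqref{2.13} and the index shift for \eqref{2.11}, all check out).
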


\begin{remark}
\label{rem2.2} According to (\ref{2.12}) and (\ref{2.13}), $U_{n}(x)$ and
$J_{n}(x)\ $satisfy the same recurrence relation. In fact, from \cite[(49),
(55)]{R1} we have%
\begin{align}
U_{n}(x)  &  =4^{-n}\dbinom{2n}{n}%
{\textstyle \sum \limits_{k=0}^{n}}
\dbinom{n}{k}^{2}\dbinom{2n}{2k}^{-1}\left(  \frac{x-1}{x+1}\right)
^{2k},\label{2.14}\\
J_{n}(x)  &  =4^{-n}%
{\textstyle \sum \limits_{k=0}^{n}}
\frac{(2k)!(2n-2k)!}{k!^{2}(n-k)!^{2}}\left(  \frac{1-x}{1+x}\right)  ^{2k+1}.
\label{2.15}%
\end{align}
From (\ref{2.14}) and (\ref{2.15}) it is easy to deduce that%
\begin{equation}
J_{n}(x)=\frac{1-x}{1+x}U_{n}(x). \label{2.16}%
\end{equation}

\end{remark}

\begin{remark}
\label{rem2.3} From \cite[(56)]{R1} and \cite[(21)]{BMR} we know that%
\begin{align}
G_{n}(x) &  =4^{1-n}%
{\textstyle \sum \limits_{k=0}^{n-1}}
\dbinom{2k}{k}\dbinom{2n-2k-2}{n-k-1}(2x+1)^{-2k-1},\label{2.17}\\
F_{n}(x) &  =%
{\textstyle \sum \limits_{k=0}^{n}}
(-1)^{k}\dbinom{n}{k}\dbinom{2k}{k}(x(1-x))^{k}.\label{2.18}%
\end{align}
So, the recurrence relations (\ref{2.10})-(\ref{2.13}) are accompanied by%
\begin{align*}
F_{0}(x) &  =1,\ F_{1}(x)=1-2x+2x^{2};\\
G_{1}(x) &  =\frac{1}{2x+1},\ G_{2}(x)=\frac{1+2x+2x^{2}}{(2x+1)^{3}};\\
U_{0}(x) &  =1,\ U_{1}(x)=\frac{1+x^{2}}{(1+x)^{2}};\\
J_{0}(x) &  =\frac{1-x}{1+x},\ J_{1}(x)=\frac{(1-x)(1+x^{2})}{(1+x)^{3}}.
\end{align*}

\end{remark}

\section{Inequalities for indices of coincidence}

\begin{theorem}
\label{th3.1} The function $F_{n}(x)$ satisfies the inequalities%
\begin{equation}
F_{n+1}(x)\leq \frac{1+(4n-2)x(1-x)}{1+(4n+2)x(1-x)}F_{n-1}(x), \label{3.1}%
\end{equation}
and%
\begin{equation}
F_{n}(x)\leq \frac{1+4nx(1-x)}{1+(4n+2)x(1-x)}F_{n-1}(x), \label{3.2}%
\end{equation}
for all $n\geq1,\ x\in \lbrack0,1].$
\end{theorem}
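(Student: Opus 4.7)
My approach will be to derive a common integral representation for $F_n$ and reduce both inequalities to a single positivity statement. Combining (2.5) with Laplace's first integral
\begin{equation*}
P_n(t) = \frac{1}{\pi}\int_0^\pi \bigl(t+\sqrt{t^2-1}\cos\theta\bigr)^n d\theta,
\end{equation*}
and using $(1-2y)^2-(1-2x)^2 = 4y^2$ with $y:=x(1-x)$ (so that $(1-2x)\sqrt{t^2-1}=2y$ for $x<1/2$), the substitution $v = (1-\cos\theta)/2$ will lead to
\begin{equation*}
F_n(x) = \frac{1}{\pi}\int_0^1 (1-4yv)^n\,\frac{dv}{\sqrt{v(1-v)}},\qquad y\in[0,1/4].
\end{equation*}
Although this is first derived for $x\in[0,1/2)$, both sides are polynomials in $y$, and the beta integral $\int_0^1 v^k(v(1-v))^{-1/2}dv = \pi\binom{2k}{k}4^{-k}$ together with (2.18) verifies the identity term-by-term, so it extends to all $x\in[0,1]$.

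The engine of the proof is the vanishing of $\sqrt{v(1-v)}(1-4yv)^n$ at $v=0,1$: integrating its $v$-derivative over $[0,1]$ produces
\begin{equation*}
\int_0^1 (1-4yv)^{n-1}\bigl[1-2v-(8n+4)yv+8(n+1)yv^2\bigr]\frac{dv}{\sqrt{v(1-v)}} = 0. \qquad (\star)
\end{equation*}
For (3.2) I would expand $(1+(4n+2)y)F_n(x)-(1+4ny)F_{n-1}(x)$ as $(2y/\pi)$ times the integral of $(1-4yv)^{n-1}[1-2v-(8n+4)yv](v(1-v))^{-1/2}$; applying $(\star)$ to replace this bracket by $-8(n+1)yv^2$ yields $-16(n+1)y^2/\pi$ multiplied by $\int_0^1 v^2(1-4yv)^{n-1}(v(1-v))^{-1/2}\,dv$, which is manifestly nonpositive since $1-4yv\ge 0$ on $[0,1]\times[0,1/4]$.

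For (3.1), the same strategy applied to $(1+(4n+2)y)F_{n+1}-(1+(4n-2)y)F_{n-1}$ produces the integrand $4y(1-4yv)^{n-1}[1-2Av+4Ayv^2]$ with $A:=1+(4n+2)y$. The crucial observation is $(8n+4)y = 2(A-1)$, which rewrites the bracket in $(\star)$ as $1-2Av+8(n+1)yv^2$; subtracting this vanishing integral cancels the constant-and-linear-in-$v$ part and leaves only the $v^2$-contribution, weighted by $4y(A-2(n+1)) = -4y(2n+1)(1-2y)\le 0$ on $[0,1/4]$. The main obstacle I anticipate is spotting these algebraic alignments, especially the coincidence $(8n+4)y = 2(A-1)$ and the factorization $A-2(n+1)=-(2n+1)(1-2y)$. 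A direct induction from (2.10) looks unattractive because the natural coefficient of $F_n$ in the inductive step changes sign on $[0,1/4]$, whereas the integral method yields a uniform, transparent positivity argument.
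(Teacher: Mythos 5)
Your proof is correct, and it takes a genuinely different route from the paper's. I checked the algebra: the representation $F_n(x)=\frac1\pi\int_0^1(1-4yv)^n\frac{dv}{\sqrt{v(1-v)}}$, $y=x(1-x)$, is exactly the paper's representation $F_n(x)=\frac1\pi\int_0^1\bigl(t+(1-t)(1-2x)^2\bigr)^n\frac{dt}{\sqrt{t(1-t)}}$ after $v=1-t$ (since $t+(1-t)(1-2x)^2=1-4y(1-t)$), so your Laplace-integral derivation plus beta-integral check is a valid, if roundabout, way to it; the identity $(\star)$ is correct (the derivative of $\sqrt{v(1-v)}(1-4yv)^n$ produces precisely that bracket, and the boundary terms vanish); and the two reductions work: for (3.2) the difference $(1+(4n+2)y)F_n-(1+4ny)F_{n-1}$ equals $-\frac{16(n+1)y^2}{\pi}\int_0^1 v^2(1-4yv)^{n-1}\frac{dv}{\sqrt{v(1-v)}}\le 0$, and for (3.1) the weight $A-2(n+1)=-(2n+1)(1-2y)$ is indeed negative on $y\in[0,1/4]$. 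The paper instead uses the same integral representation only to get the convexity inequality $2F_n\le F_{n-1}+F_{n+1}$, and then combines it with the three-term recurrence (\ref{2.10}) already established in Theorem \ref{th2.1}, solved for $F_n$ as a convex-type combination of $F_{n-1}$ and $F_{n+1}$; (3.2) then follows from (3.1) and convexity. Your $(\star)$ is essentially the recurrence (\ref{2.10}) in integral form (express the $v$ and $v^2$ moments through $F_{n-1},F_n,F_{n+1}$), so the two proofs share the same underlying structure, but your positivity mechanism is different: you obtain exact identities with an explicitly signed remainder term, which is slightly more informative than the inequalities themselves, whereas the paper's argument is shorter given Theorem \ref{th2.1} and its convexity step (\ref{3.4}) is reused for the other three families in Theorem \ref{th.3.2}.
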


\begin{proof}
We start with (see \cite[(29)]{R1})%
\[
F_{n}(x)=\frac{1}{\pi}\int_{0}^{1}f^{n}(x,t)\frac{dt}{\sqrt{t(1-t)}},
\]
where $f(x,t):=t+(1-t)(1-2x)^{2}\in \lbrack0,1].$

It follows that%
\begin{equation}
F_{n+1}(x)\leq F_{n}(x).\label{3.3}%
\end{equation}
On the other hand,%
\begin{align*}
&F_{n-1}(x)+F_{n+1}(x)-2F_{n}(x)=\\
&=\frac{1}{\pi}\int_{0}^{1}f^{n-1}(x,t)\left[
1+f^{2}(x,t)-2f(x,t)\right]\!\!\frac{dt}{\sqrt{t(1-t)}},
\end{align*}
which entails%
\begin{equation}
2F_{n}(x)\leq F_{n-1}(x)+F_{n+1}(x).\label{3.4}%
\end{equation}
According to (\ref{2.10}), we have%
\begin{equation}
F_{n}(x)=a_{n}(x)F_{n-1}(x)+b_{n}(x)F_{n+1}(x),\label{3.5}%
\end{equation}
where%
\[
a_{n}(x)=\frac{n(1-2x)^{2}}{(2n+1)(1-2x+2x^{2})},\ b_{n}(x)=\frac
{n+1}{(2n+1)(1-2x+2x^{2})}.
\]
Using (\ref{3.4}) and (\ref{3.5}) we get%
\[
2a_{n}(x)F_{n-1}(x)+2b_{n}(x)F_{n+1}(x)\leq F_{n-1}(x)+F_{n+1}(x),
\]
which yields%
\[
(2b_{n}(x)-1)F_{n+1}(x)\leq(1-2a_{n}(x))F_{n-1}(x),
\]
and this immediately leads to (\ref{3.1}). To prove (\ref{3.2}) it suffices to
combine (\ref{3.4}) and (\ref{3.1}).
\end{proof}

\begin{theorem}
\label{th.3.2} The following inequalities hold:%
\begin{align}
2U_{n}  &  \leq U_{n-1}+U_{n+1},\label{3.6}\\
U_{n+1}(x)  &  \leq \frac{1+4nx+x^{2}}{1+(4n+4)x+x^{2}}U_{n-1}(x),\label{3.7}\\
U_{n}(x)  &  \leq \frac{1+(4n+2)x+x^{2}}{1+(4n+4)x+x^{2}}U_{n-1}(x),
\label{3.8}\\
2G_{n}  &  \leq G_{n-1}+G_{n+1},\label{3.9}\\
G_{n+1}(x)  &  \leq \frac{1+(4n-2)x(1+x)}{1+(4n+2)x(1+x)}G_{n-1}%
(x),\label{3.10}\\
G_{n}(x)  &  \leq \frac{1+4nx(x+1)}{1+(4n+2)x(x+1)}G_{n-1}(x),\label{3.11}\\
2J_{n}  &  \leq J_{n-1}+J_{n+1},\label{3.12}\\
J_{n+1}(x)  &  \leq \frac{1+4nx+x^{2}}{1+(4n+4)x+x^{2}}J_{n-1}(x),\label{3.13}%
\\
J_{n}(x)  &  \leq \frac{1+(4n+2)x+x^{2}}{1+(4n+4)x+x^{2}}J_{n-1}(x).
\label{3.14}%
\end{align}

\end{theorem}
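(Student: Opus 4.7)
The strategy is to mirror the three-step scheme used for $F_n$ in the proof of Theorem~\ref{th3.1}: first establish the convexity inequality in $n$, then combine it with the appropriate three-term recurrence from Theorem~\ref{th2.1} to obtain the ``skip'' ratio inequality relating indices $n-1$ and $n+1$, and finally combine the two to get the consecutive ratio inequality relating $n-1$ and $n$ in exactly the way (\ref{3.2}) was derived from (\ref{3.1}) and (\ref{3.4}).

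The $U_n$ and $J_n$ cases require essentially no new work. By (\ref{2.2}) one has $U_n(y)=F_n(y/(1+y))$, so substituting $x=y/(1+y)$ in (\ref{3.4}), (\ref{3.1}), (\ref{3.2}) yields (\ref{3.6})--(\ref{3.8}); the algebra is clean because $x(1-x)\big|_{x=y/(1+y)}=y/(1+y)^2$, so for instance $1+(4n-2)x(1-x)=[1+4ny+y^2]/(1+y)^2$, producing exactly the ratios in (\ref{3.7}). For the $J_n$ family, relation (\ref{2.16}) writes $J_n(x)=\tfrac{1-x}{1+x}\,U_n(x)$ with the factor $(1-x)/(1+x)\ge 0$ on the natural domain $[0,1)$, so multiplying the three $U_n$ inequalities through by this factor gives (\ref{3.12})--(\ref{3.14}).

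Only the $G_n$ family requires genuine work. Starting from (\ref{2.1}) rewritten as $G_{n+1}(y)=F_n(-y)/(1+2y)^{2n+1}$ and inserting the integral formula $F_n(x)=\tfrac{1}{\pi}\int_0^1[t+(1-t)(1-2x)^2]^n\,dt/\sqrt{t(1-t)}$ used in the proof of Theorem~\ref{th3.1}, I pull $(1+2y)^{2n}$ inside the integrand to obtain
\[
G_n(y)=\frac{1}{\pi(1+2y)}\int_0^1 h(y,t)^{n-1}\,\frac{dt}{\sqrt{t(1-t)}},\qquad h(y,t):=\frac{t}{(1+2y)^2}+(1-t),
\]
with $h(y,t)\in[0,1]$ for $y\ge 0$, $t\in[0,1]$. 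The identity $G_{n-1}+G_{n+1}-2G_n=\tfrac{1}{\pi(1+2y)}\int_0^1 h^{n-2}(1-h)^2\,dt/\sqrt{t(1-t)}\ge 0$ then gives (\ref{3.9}). Solving (\ref{2.11}) for $G_n$ yields $G_n=\tilde a_n G_{n-1}+\tilde b_n G_{n+1}$ with $\tilde a_n=(n-1)/[(2n-1)(1+2x+2x^2)]$ and $\tilde b_n=n(1+2x)^2/[(2n-1)(1+2x+2x^2)]$, and a short computation (entirely parallel to that of $1-2a_n$ and $2b_n-1$ for $F_n$, exploiting $(1+2x)^2=(1+2x+2x^2)+2x^2$) shows
\[
1-2\tilde a_n=\frac{1+(4n-2)x(1+x)}{(2n-1)(1+2x+2x^2)},\qquad 2\tilde b_n-1=\frac{1+(4n+2)x(1+x)}{(2n-1)(1+2x+2x^2)};
\]
feeding these into (\ref{3.9}) produces (\ref{3.10}), and combining (\ref{3.10}) with (\ref{3.9}) yields (\ref{3.11}).

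The main obstacle is the derivation and verification of the integral representation for $G_n$ with the kernel $h$ taking values in $[0,1]$, since without this nothing else is available to replace the role played by the integral representation of $F_n$. Once this is in place, the algebraic identifications of $1-2\tilde a_n$ and $2\tilde b_n-1$, although slightly tedious, are completely analogous to the corresponding step for $F_n$, and the remaining implications are formal.
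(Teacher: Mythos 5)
Your proposal is correct, but it takes a partly different route from the paper. The paper's own proof simply repeats the scheme of Theorem \ref{th3.1} three times, citing ready-made integral representations from \cite{R1} for $U_n$, $G_n$ and $J_n$ (with kernels $t+(1-t)\left(\frac{1-x}{1+x}\right)^{2}$, $\left(t+(1-t)(1+2x)^{2}\right)^{-1}$, etc.) and then combining the resulting convexity in $n$ with the recurrences (\ref{2.12}), (\ref{2.11}), (\ref{2.13}). You instead obtain (\ref{3.6})--(\ref{3.8}) and (\ref{3.12})--(\ref{3.14}) with no integrals and no recurrences at all, by transporting the already-proved inequalities (\ref{3.1}), (\ref{3.2}) and the convexity $2F_n\le F_{n-1}+F_{n+1}$ through the algebraic relations (\ref{2.2}) and (\ref{2.16}); your substitution algebra ($x(1-x)=y/(1+y)^2$, giving the ratios $\frac{1+4ny+y^2}{1+(4n+4)y+y^2}$ and $\frac{1+(4n+2)y+y^2}{1+(4n+4)y+y^2}$) checks out, as does multiplying through by the $n$-independent nonnegative factor $\frac{1-x}{1+x}$ for the $J_n$ family. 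Only for $G_n$ do you follow the paper's pattern, and there your derived representation $G_n(y)=\frac{1}{\pi(1+2y)}\int_0^1 h^{n-1}\,dt/\sqrt{t(1-t)}$ is a correct (equivalent) substitute for the representation \cite[(58)]{R1} that the paper quotes; your values of $1-2\tilde a_n$ and $2\tilde b_n-1$ are right, and (\ref{3.10}), (\ref{3.11}) follow exactly as (\ref{3.1}), (\ref{3.2}) did. One small point you should make explicit: you evaluate the integral formula for $F_n$ at the negative argument $-y$, whereas it is stated (and its kernel lies in $[0,1]$) only for $x\in[0,1]$; this is harmless because both sides are polynomials in $x$, so the identity extends to all real $x$, but the one-line justification belongs in the proof. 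Alternatively you could simply cite \cite[(58)]{R1} as the paper does and skip the derivation. What your route buys is economy (the $U$ and $J$ cases become formal corollaries of Theorem \ref{th3.1}) and it makes explicit the algebra the paper leaves behind the phrase ``the proof is similar''; what the paper's route buys is uniformity, treating all four families by one and the same integral-representation argument.
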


\begin{proof}
\bigskip The proof is similar to that of Theorem \ref{th3.1}, starting from
(see \cite[(48), (58), (63)]{R1}):%
\begin{align*}
U_{n}(x)  &  =\frac{1}{\pi}\int \nolimits_{0}^{1}\left(  t+(1-t)\left(
\frac{1-x}{1+x}\right)  ^{2}\right)  ^{n}\frac{dt}{\sqrt{t(1-t)}},\\
G_{n}(x)  &  =\frac{1}{\pi}\int \nolimits_{0}^{1}\left(  t+(1-t)(1+2x)^{2}%
\right)  ^{-n}\frac{dt}{\sqrt{t(1-t)}},\\
J_{n}(x)  &  =\frac{1}{\pi}\int \nolimits_{0}^{1}\left(  t+(1-t)\left(
\frac{1+x}{1-x}\right)  ^{2}\right)^{-n-1}\!\!\!\!\frac{dt}{\sqrt{t(1-t)}}.
\end{align*}

\end{proof}

\section{Other inequalities}

Let us return to the index of coincidence (\ref{1.1}). According to
\cite[(10)]{R1} we have for $c\neq0,$%
\[
S_{n,c}(x)=\frac{1}{\pi}\int \nolimits_{0}^{1}\left[  t+(1-t)(1+2cx)^{2}%
\right]  ^{-\frac{n}{c}}\frac{dt}{\sqrt{t(1-t)}}.
\]
Let $c<0$. Using Chebyshev's inequality for synchronous functions we can write%
\begin{align*}
S_{n-c,c}(x)  &  =\frac{1}{\pi}\!\!\int \nolimits_{0}^{1}\!\!\left[  t\!+\!(1\!-\!t)(1\!+\!2cx)^{2}%
\right]  ^{-\frac{n}{c}}\!\!\left[  t\!+\!(1\!-\!t)(1\!+\!2cx)^{2}\right]  \frac{dt}%
{\sqrt{t(1\!\!-\!\!t)}}\\
&  \geq \frac{1}{\pi}\int \nolimits_{0}^{1}\left[  t+(1-t)(1+2cx)^{2}\right]
^{-\frac{n}{c}}\frac{dt}{\sqrt{t(1-t)}}\cdot \\
& \quad \cdot \frac{1}{\pi}\int \nolimits_{0}^{1}\left[  t+(1-t)(1+2cx)^{2}%
\right]  \frac{dt}{\sqrt{t(1-t)}}\\
&  =S_{n,c}(x)(1+2cx+2c^{2}x^{2}).
\end{align*}

For $c>0$ we use Chebyshev's inequality for asynchronous functions and get the
reverse inequality. So we have

\begin{theorem}
\label{th.4.1}If $c<0$, then%
\begin{equation}
S_{n-c,c}(x)\geq(1+2cx(1+cx))S_{n,c}(x). \label{4.1}%
\end{equation}
If $c>0$, the inequality is reversed.
\end{theorem}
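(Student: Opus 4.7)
The plan is to combine the integral representation of $S_{n,c}(x)$ displayed at the start of Section~4 with Chebyshev's integral inequality, applied as a covariance inequality against the arcsine probability measure $d\mu(t):=\frac{dt}{\pi\sqrt{t(1-t)}}$ on $[0,1]$. Set $h(t):=t+(1-t)(1+2cx)^2$, so that $S_{m,c}(x)=\int_0^1 h(t)^{-m/c}\,d\mu(t)$ for every admissible $m$. The key bookkeeping observation is that substituting $m=n-c$ produces exactly one additional factor of $h(t)$ in the integrand, giving
$$S_{n-c,c}(x)=\int_0^1 h(t)^{-n/c}\cdot h(t)\,d\mu(t).$$

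First I would evaluate the unperturbed integral $\int_0^1 h(t)\,d\mu(t)$. Writing $h$ in the linear form $h(t)=(1+2cx)^2-4cx(1+cx)\,t$ and using $\int_0^1 t\,d\mu(t)=\tfrac12$ (the mean of the arcsine distribution on $[0,1]$), one obtains $\int_0^1 h(t)\,d\mu(t)=1+2cx(1+cx)$, precisely the constant appearing in (\ref{4.1}).

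The decisive step is a monotonicity analysis of the two factors $h$ and $h^{-n/c}$ as functions of $t$. Since $h$ is linear in $t$ with slope $-4cx(1+cx)$, and since $x\in I_c$ forces $1+cx\geq 0$, this slope has sign opposite to that of $c$. The exponent $-n/c$ also has sign opposite to that of $c$. A short case check then shows that for $c<0$ both $h$ and $h^{-n/c}$ are non-decreasing in $t$ (synchronous), while for $c>0$ one is non-decreasing and the other non-increasing (asynchronous).

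Finally I would invoke Chebyshev's integral inequality against the probability measure $d\mu$: the inequality $\int fg\,d\mu\geq \left(\int f\,d\mu\right)\left(\int g\,d\mu\right)$ holds for synchronous pairs, and reverses for asynchronous pairs. Applying this with $f=h^{-n/c}$ and $g=h$ gives $S_{n-c,c}(x)\geq(1+2cx(1+cx))\,S_{n,c}(x)$ when $c<0$ and the opposite inequality when $c>0$. I expect the main care point to be the sign bookkeeping in the monotonicity analysis (and the tacit use of $1+cx\geq 0$ to fix the slope's sign); once that is carried out cleanly, the rest is a direct substitution.
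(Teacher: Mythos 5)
Your proposal is correct and follows essentially the same route as the paper: the same integral representation of $S_{n,c}(x)$ against the arcsine weight, the observation that passing from $n$ to $n-c$ inserts one extra factor $t+(1-t)(1+2cx)^2$, the evaluation of its mean as $1+2cx(1+cx)$, and Chebyshev's inequality for synchronous (resp.\ asynchronous) functions when $c<0$ (resp.\ $c>0$). Your explicit sign/monotonicity check is a detail the paper leaves implicit, but the argument is the same.
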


\begin{corollary}
\label{cor4.2} For $c=-1,$ (\ref{4.1}) and (\ref{3.2}) yield%
\[
(1-2x(1-x))F_{n}(x)\leq F_{n+1}(x)\leq \frac{1+(4n+4)x(1-x)}{1+(4n+6)x(1-x)}%
F_{n}(x).
\]
For $c=1$ we obtain%
\[
\frac{\ 1}{1+2x(1+x)}G_{n}(x)\leq G_{n+1}(x)\leq \dfrac{1+(4n+4)x(1+x)}%
{1+(4n+6)x(1+x)}G_{n}(x).
\]

\end{corollary}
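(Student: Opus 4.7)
The plan is to read off both double inequalities as direct consequences of Theorem \ref{th.4.1} combined with the one-step upper bounds already proved in Theorems \ref{th3.1} and \ref{th.3.2}. No new integral estimates are needed; the work is entirely bookkeeping of index shifts and sign substitutions.

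First, for the case $c=-1$, I would specialize Theorem \ref{th.4.1}. With $c=-1$ one has $n-c=n+1$ and $1+2cx(1+cx)=1-2x(1-x)$, so (\ref{4.1}) reads $F_{n+1}(x)=S_{n+1,-1}(x)\geq (1-2x(1-x))\,S_{n,-1}(x)=(1-2x(1-x))\,F_n(x)$, which is the left inequality. For the right inequality I would apply (\ref{3.2}) with $n$ replaced by $n+1$: since (\ref{3.2}) gives $F_n(x)\leq \frac{1+4nx(1-x)}{1+(4n+2)x(1-x)} F_{n-1}(x)$, shifting the index yields $F_{n+1}(x)\leq \frac{1+(4n+4)x(1-x)}{1+(4n+6)x(1-x)} F_n(x)$, exactly as claimed.

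Next, for $c=1$ the inequality in Theorem \ref{th.4.1} is reversed, and with $c=1$ it becomes $G_{n-1}(x)=S_{n-1,1}(x)\leq (1+2x(1+x))\,S_{n,1}(x)=(1+2x(1+x))\,G_n(x)$. Replacing $n$ by $n+1$ and dividing through produces the left bound $G_{n+1}(x)\geq \frac{1}{1+2x(1+x)} G_n(x)$. For the matching upper bound I would not use (\ref{3.2}) (which concerns $F_n$) but rather its $G$-analog (\ref{3.11}); shifting $n\mapsto n+1$ in $G_n(x)\leq \frac{1+4nx(x+1)}{1+(4n+2)x(x+1)} G_{n-1}(x)$ gives $G_{n+1}(x)\leq \frac{1+(4n+4)x(1+x)}{1+(4n+6)x(1+x)} G_n(x)$.

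The only real pitfall is sign bookkeeping in Theorem \ref{th.4.1} when specializing $c$, so I would verify the substitution $1+2cx(1+cx)$ carefully at $c=\pm 1$ and make sure the direction of Chebyshev's inequality (synchronous versus asynchronous) matches the sign of $c$. Once that is checked, both chains of inequalities follow by purely algebraic index shifts, so I do not expect any substantive obstacle.
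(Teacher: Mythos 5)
Your proposal is correct and follows exactly the paper's intended route: specialize Theorem \ref{th.4.1} at $c=\pm1$ for the lower bounds and shift the index in (\ref{3.2}) (respectively its $G$-analog (\ref{3.11})) for the upper bounds. Your observation that the $G$ upper bound comes from (\ref{3.11}) rather than (\ref{3.2}) is the right reading of the corollary, and the sign check $1+2cx(1+cx)\big|_{c=-1}=1-2x(1-x)$ is exactly the verification needed.
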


Now using \cite[(48)]{R1} we have%
\begin{align*}
U_{n+1}(x)  &  =\frac{1}{\pi}\int \nolimits_{0}^{1}\left(  \left(  \frac
{1-x}{1+x}\right)  ^{2}+\frac{4x}{(1+x)^{2}}t\right)  ^{n}\cdot\\
&\quad\cdot\left(  \left(
\frac{1-x}{1+x}\right)  ^{2}+\frac{4x}{(1+x)^{2}}t\right)  \frac{dt}%
{\sqrt{t(1-t)}}\\
&  \geq U_{n}(x)\frac{1}{\pi}\int \nolimits_{0}^{1}\left(  \left(  \frac
{1-x}{1+x}\right)  ^{2}+\frac{4x}{(1+x)^{2}}t\right)  \frac{dt}{\sqrt{t(1-t)}%
}\\
&  =\frac{1+x^{2}}{(1+x)^{2}}U_{n}(x).
\end{align*}
Therefore, using also (\ref{3.8}), we get%
\begin{equation}
\frac{1+x^{2}}{(1+x)^{2}}U_{n}(x)\leq U_{n+1}(x)\leq \dfrac{1+(4n+6)x+x^{2}%
}{1+(4n+8)x+x^{2}}U_{n}(x). \label{4.2}%
\end{equation}

Now (\ref{4.2}) and (\ref{2.16}) yield%
\[
\frac{1+x^{2}}{\left(  1+x\right)  ^{2}}J_{n}(x)\leq J_{n+1}(x)\leq
\dfrac{1+(4n+6)x+x^{2}}{1+(4n+8)x+x^{2}}J_{n}(x).
\]

\begin{remark}
\label{rem4.3} Inequalities involving the function $K_{n}(x):=S_{n,0}(x)$ can
be obtained with different techniques and will be presented elsewhere.
\end{remark}

\begin{remark}
\label{rem4.4}All the above inequalities can be used to get information
concerning the entropies described in (\ref{1.2}) and (\ref{1.3}). We omit the details.
\end{remark}

\begin{remark}
\label{rem4.5} Convexity properties of the indices of coincidence and the
associated entropies were presented in \cite{R2, R3} but the hypothesis
$a_{n-k}=a_{k},\ k=0,1,\ldots,n$ was inadvertently omitted in \cite[Conjecture
6.1]{R3}.
\end{remark}

\end{document}